\newlist{steps}{enumerate}{1}
\setlist[steps, 1]{label = Step \arabic*:}
\newcommand{\gd}{\Delta}
\newcommand{\inpt}[1]{\langle #1 \rangle}
\newcommand{\gw}{\Omega}
\newcommand{\ap}{\alpha}
\newcommand{\gb}{\beta}
\newcommand{\G}{\Gamma}
\newcommand{\ms}{\mathscr}
\newcommand{\nb}{\nabla}
\newcommand{\pdr}{\partial}
\newcommand{\beq}{\begin{equation}}
\newcommand{\eeq}{\end{equation}}
\newcommand{\bea}{\begin{align}}
\newcommand{\eea}{\end{align}}
\newcommand{\bthm}{\begin{theorem}}
\newcommand{\ethm}{\end{theorem}}
\newcommand{\bpr}{\begin{proof}}
\newcommand{\epr}{\end{proof}}
\newcommand{\bcl}{\begin{corollary}}
\newcommand{\ecl}{\end{corollary}}
\newcommand{\bpn}{\begin{proposition}}
\newcommand{\epn}{\end{proposition}}
\newcommand{\bre}{\begin{remark}}
\newcommand{\ere}{\end{remark}}
\newcommand{\bdf}{\begin{definition}}
\newcommand{\edf}{\end{definition}}
\newcommand{\bss}{\begin{align*}}
\newcommand{\ess}{\end{align*}}
\newcommand{\bl}{\label}
\newcommand{\mR}{\mathbb{R}}
\newcommand{\mH}{\mathbb{H}}
\newcommand{\mE}{\mathbb{E}}
\newtheorem{theorem}{Theorem}[section]
\newtheorem{corollary}[theorem]{Corollary}
\newtheorem{proposition}[theorem]{Proposition}
\theoremstyle{definition}
\newtheorem{definition}[theorem]{Definition}
\theoremstyle{remark}
\newtheorem{remark}{Remark}
\numberwithin{equation}{section}
\begin{document}

\title[Synchronization of Complex Neural Networks]{Dynamics and Synchronization of Complex Neural Networks with Boundary Coupling}

\author[C. Phan]{Chi Phan}
\address{Department of Mathematics and Statistics, University of South Florida, Tampa, FL 33620, USA}
\email{chi@mail.usf.edu}
\thanks{}

\author[L. Skrzypek]{Leslaw Skrzypek}
\address{Department of Mathematics and Statistics, University of South Florida, Tampa, FL 33620, USA}
\email{skrzypek@usf.edu}
\thanks{}

\author[Y. You]{Yuncheng You}
\address{Department of Mathematics and Statistics, University of South Florida, Tampa, FL 33620, USA}
\email{you@mail.usf.edu}
\thanks{}

\subjclass[2010]{35B40, 35B45, 35K58, 35M33, 35Q92, 92B25, 92C20}

\date{April 16, 2020}


\keywords{Complex neural network, asymptotic synchronization, Hindmarsh-Rose equations, boundary coupling, absorbing dynamics.}

\begin{abstract} 
A new mathematical model for complex neural networks of the partly diffusive Hindmasrh-Rose equations with boundary coupling is proposed. Through analysis of absorbing dynamics for the solution semiflow, the asymptotic synchronization of the complex neuronal networks at a uniform exponential rate is proved under the condition that stimulation signal strength of the ensemble boundary coupling exceeds a quantitative threshold expressed by the biological parameters. 
\end{abstract}

\maketitle

\section{\textbf{Introduction}}

Synchronization of biological neuron firing and bursting plays an important role in processing information and executing commands by the complex neural networks in the brain and central nerve system. Understanding of the synchronization and desynchronization mechanisms in biological neural networks by means of mathematical models and analysis is one of the central topics for advancing the researches in neuroscience and medical science, even in the theory of artificial neural networks and artificial intelligence. Increased and fast synchronization may lead to enhanced functionality and performance of central neuronal system or may lead to functional disorders of neuron systems such as epilepsy and Parkinson's disease \cite{HBB, JC, L, TBSS}.

Neurons are the nerve cells which form the major pathways of communication and ensemble networks capable to process, coordinate, and integrate biochemical and bio-electric synaptic signals. Approximately 100 billion neurons can be found in human nervous system and they are connected with approximately $10^{14}$ synapses. Neuron signals are short electric pulses called spikes or action potential. The synaptic pulse inputs received by neuron dendrites modify the intercellular membrane potentials and may cause bursting in alternating phases of rapid firing and then refractory quiescence. Neuron signals triggered at the axon hillock can propagate along the axon and transmit to the neighbor neurons. In a complex neural network, stimulation signals through synaptic couplings in ensemble neuron activities have to reach certain threshold for achieving synchronization \cite{DJ, SK}.

There are several mathematical models in ODEs to describe single neuron dynamics. The four-dimensional Hodgkin-Huxley model \cite{HH} (1952) is highly nonlinear and consists of the membrane potential equation combined with three ionic current equations of sodium, potassium, and others called leakage. The simplified two-dimensional FitzHugh-Nagumo model \cite{FH, NM} (1961-1962) with the membrane potential variable and the ion current variable features an effective phase-plane analysis to characterize the refractorily periodic excitation of neurons in response to suprathreshold input pulse, but this 2D model does not have chaotic solutions so that no self-sustained chaotic bursting can be shown. 

Another type of models is the three-dimensional Hindmarsh-Rose equations (1984) initiated in \cite{HR} and the diffusive as well as stochastic Hindmarsh-Rose equations recently proposed and studied on topics of regular and chaotic bursting dynamics, global attractors, and random attractors, cf. \cite{DH, ET, Hetal, IG, Phan, PY, PYY, SPH, WS} and the references therein.

Investigation of synchronization for biological neural networks has been conducted by using several mathematical models and methods. Most published results dealt with the FitzHuigh-Nagumo neuron networks with coupling by the gap junctions or called the space-clamped coupling \cite{AC, IJ, WLZ, Yong} represented by the linear terms $C \sum_{j (\neq i)} a_{ij} (x_j (t) - x_i (t))$ in the differential equation of the membrane potential for the $i$-th neuron. The mean field models of Hodgkin-Huxley and FitzHuigh-Nagumo neuron networks possibly with noise were studied in \cite{BF, Dick, QT} replacing the sum of coupling terns shown above by its average. Synchronization and control of the diffusive FitzHugh-Nagumo type or reaction-diffusion type neural networks has also been investigated  \cite{AA, YCY}, which consists of multiple chain-like neurons with the distributed coupling terms $\alpha_i (u_{i-1}(t,x) - u_i (t,x))$ and $\beta_i (v_{i-1}(t,x) - v_i (t,x))$ in the two reaction-diffusion equations for the $i$-th neuron, where $x$ is in the interior of a spatial domain, or by the pointwise pinning-impulse controllers in the interior of a spatial domain. Moreover, in recent years synchronization of chaotic neural networks and stochastic neural networks has also been studied, cf. \cite{Dick, Hetal, SG, WZD},

Synchronization of two coupled Hindmarsh-Rose neurons is shown in \cite{DH, CY}. Very recently we have proved in \cite{PyY} the asymptotic synchronization of the Hindmarsh-Rose neuron networks in a star-like local sense.

In this paper, we shall put forward a new mathematical model of complex neural networks of the partly diffusive Hindmarsh-Rose neurons featuring the dynamic boundary coupling based on the Fick's law and the Kirchhoff's law of the biochemical and bio-electric synapses crossing the boundaries among the neurons in the network. 

We shall formulate the new model first and then analyze the dynamics of the solution semiflows of the partly diffusive Hindmarsh-Rose neural network equations to reach the proof of the main result on the asymptotic exponential synchronization of this type complex neural networks.

\section{\textbf{New Model of Complex Neural Networks with Boundary Coupling}}

\vspace{3pt}
In this work, we propose a new model of complex neural networks in terms of the partly diffusive Hindmarsh-Rose equations,
\beq \bl{cHR}
\begin{split}
	\frac{\pdr u_i}{\pdr t} & = d \gd u_i + au_i^2 - bu_i^3 + v_i - w_i + J,  \quad 1 \leq i \leq N, \\
	\frac{\pdr v_i}{\pdr t} & =  \alpha - v_i - \beta u_i^2,  \quad 1 \leq i \leq N,  \\
	\frac{\pdr w_i}{\pdr t} & = q (u_i - c) - rw_i, \quad 1 \leq i \leq N,
\end{split}
\eeq
for $t > 0,\; x \in \gw \subset \mathbb{R}^{n}$ ($n \leq 3$), an integer $N \geq 2$, and $\gw$ is a bounded domain and its boundary denoted by
$$
	\partial \gw = \G = \bigcup_{j = 1}^N \G_{ij}, \quad \text{for} \;\, i = 1, \cdots, N,
$$
is locally Lipschitz continuous, where $\G_{ij} = \G_{ji}$ and for each given $i \in \{1, \cdots, N\}$ the boundary pieces $\{\G_{ij}: j = 1, \cdots, N\}$  are measurable and mutually non-overlapping. Here $(u_i (t,x), v_i (t,x), w_i (t,x)), \,i = 1, \cdots, N,$ are the state variables for the $i$-th neuron denoted by $\mathcal{N}_i$ in this network. All the neurons are boundary coupled (or some are uncoupled) with the other neurons $\{\mathcal{N}_j: j \neq i \}$ in the network. 

In this Hindmarsh-Rose neuron system \eqref{cHR}, for the $i$-th neuron, the variable $u_i(t,x)$ refers to the membrane electric potential of a neuron cell, the variable $v_i(t, x)$ called the spiking variables represents the transport rate of the ions of sodium and potassium through the fast ion channels, and the variable $w_i(t, x)$ called the bursting variable represents the transport rate across the neuron cell membrane through slow channels of calcium and other ions.

The boundary conditions affiliated with the system \eqref{cHR} are given by
\beq \label{nbc}
	\frac{\pdr u_i}{\pdr \nu} (t, x) + p u_i = pu_j, \quad \text{for}  \; x \in \G_{ij}, \;\,  j \in \{1, \cdots, N\},
\eeq
for $1 \leq i \leq N$, where $\pdr/\pdr \nu$ stands for the normal outward derivative, $p > 0$ is the coupling strength constant. By \eqref{nbc}, $\frac{\pdr u_i}{\pdr \nu} (t, x)  = 0$ for $x \in \G_{ii}, 1 \leq i \leq N$. 

The initial conditions of the equations \eqref{cHR} to be specified will be denoted by
\beq \bl{inc}
	u_i(0, x) = u_i^0 (x), \;\, v_i(0, x) = v_i^0 (x), \;\, w_i (0, x) = w_i^0 (x), \quad x \in \gw, \; 1 \leq i \leq N.
\eeq
All the parameters in this system \eqref{cHR} are positive constants except a reference value of the neuronal membrane potential $c = u_R \in \mR$.

This new mathematical model of the partly diffusive Hindmarsh-Rose neural network is a system of partial-ordinary differential equations featuring the Robin-type boundary condition, which is exactly the scaled combination of the Fick's law and the Kirchhoff's law crossing the coupled cell boundaries and also reflects that synaptic signal stimulations mainly take place in the bio-electric potential $u_i$-equations. 

In this study of the neural network \eqref{cHR}-\eqref{inc}, we define the following Hilbert spaces for each of the subsystems representing the involved single neurons:
$$
	H = L^2 (\gw, \mR^3), \quad  \text{and} \quad E = H^1 (\gw) \times L^2 (\gw, \mR^2)
$$
and the corresponding product Hilbert spaces 
$$
	\mathbb{H} =  [L^2 (\gw, \mR^3)]^{N} \quad \text{and} \quad \mathbb{E} = [H^1 (\gw) \times L^2 (\gw, \mR^2)]^{N}
$$
for the entire system \eqref{cHR}-\eqref{inc}. The norm and inner-product of the Hilbert spaces $\mathbb{H}, \, H$ or $L^2(\gw)$ will be denoted by $\| \, \cdot \, \|$ and $\inpt{\,\cdot , \cdot\,}$, respectively. The norm of $\mathbb{E}$ or $E$ will be denoted by $\| \, \cdot \, \|_E$. We use $| \, \cdot \, |$ to denote the vector norm or the Lebesgue measure of a set in $\mR^n$.

The initial-boundary value problem \eqref{cHR}-\eqref{inc} can be formulated into an initial value problem of the evolutionary equation:
\begin{equation} \label{pb}
	\begin{split}
	\frac{\partial g_i}{\partial t} = A_i g_i &\, + f(g_i), \quad t > 0, \;\, 1 \leq i \leq N, \\[2pt]
	& g_i (0) = g_i^0 \in H.
	\end{split}
\end{equation}
Here $g_i(t) = (u_i(t, \cdot), v_i (t, \cdot ), w_i(t, \cdot))$, whose initial data functions are denoted by $g_i^0 = \text{col}\, (u_i^0, v_i^0, w_i^0),$ for $1 \leq i \leq N$. The nonpositive, self-adjoint operator $A$ for the entire neural network is defined to be $A = diag \,(A_1, \cdots, A_N)$, with the $i$-th block operator being
\begin{equation} \label{opA}
	A_i = 
\begin{bmatrix}
d \gd \quad & 0  \quad & 0 \\[3pt]
0 \quad & - I \quad  & 0 \\[3pt]
0 \quad & 0 \quad & - r I 
\end{bmatrix},
\end{equation}
and its domain is
$$
	D(A) = \{ \text{col}\, (h_1, \cdots, h_m) \in [H^2(\gw) \times L^2 (\gw, \mR^2)]^{N}: \text{\eqref{nbc} satisfied}\}.
$$ 
Due to the continuous Sobolev imbedding $H^{1}(\gw) \hookrightarrow L^6(\gw)$ for space dimension $n \leq 3$ and by the H\"{o}lder inequality, the nonlinear mapping 
\begin{equation} \label{opf}
	f(g_i) =
\begin{pmatrix}
au_i^2 - bu_i^3 + v_i - w_i + J \\[3pt]
\alpha - \beta u_i^2  \\[3pt]
q (u_i - c) \\[3pt]
\end{pmatrix}
: E \longrightarrow H 
\end{equation}
is locally Lipschitz continuous for $1 \leq i \leq N$. 

We shall consider the weak solutions of this initial value problem \eqref{pb}, cf. \cite[Section XV.3]{CV} and the corresponding definition we presented in \cite{PY, CY}. The following underlying proposition can be proved by the Galerkin approximation method.

\begin{proposition} \label{pps}
	For any given initial state $(g_1^0, \cdots , g_N^0) \in \mathbb{H}$, there exists a unique weak solution $(g_1 (t, g_1^0), \cdots, g_N (t, g_N^0))$ local in time $t \in [0, \tau]$, for some $\tau > 0$, of the initial value problem \eqref{pb} formulated from the initial-boundary value problem \eqref{cHR}-\eqref{inc}. The weak solution continuously depends on the initial data and satisfies 
	\begin{equation} \label{soln}
	(g_1, \cdots, g_N)  \in C([0, \tau]; \,\mH) \cap C^1 ((0, \tau); \,\mH) \cap L^2 ([0, \tau]; \,\mE).
	\end{equation}
If the initial state is in $\mE$, then the solution is a strong solution with the regularity
	\begin{equation} \bl{ss}
	(g_1, \cdots, g_N) \in C([0, \tau]; \,\mE) \cap C^1 ((0, \tau); \,\mE) \cap L^2 ([0, \tau]; \,D(A)).
	\end{equation}
\end{proposition}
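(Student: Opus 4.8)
The plan is to prove Proposition~\ref{pps} by the Galerkin approximation method applied to the evolutionary formulation \eqref{pb}, obtaining the strong-solution regularity from a second round of a priori estimates (or, alternatively, from the analytic semigroup generated by $A$). First I would set up the functional-analytic framework: $-A$ is the self-adjoint operator on $\mH$ associated with the symmetric sesquilinear form
\[
	\mathfrak{a}(g, \tg) = \sum_{i=1}^{N} \Big( d \int_{\gw} \nb u_i \cdot \nb \tu_i \, dx + \int_{\gw} (v_i \tv_i + r w_i \tw_i)\, dx \Big) + \frac{dp}{2} \sum_{i,j = 1}^{N} \int_{\G_{ij}} (u_i - u_j)(\tu_i - \tu_j)\, dS ,
\]
in which the Robin coupling \eqref{nbc} has been symmetrized into a manifestly nonnegative boundary quadratic form using $\G_{ij} = \G_{ji}$ and the non-overlapping property of the $\G_{ij}$. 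Hence $-A$ is nonnegative and sectorial on $\mH$, it generates an analytic $C_0$-semigroup $e^{tA}$, and $D((-A)^{1/2}) = \mE$; together with the local Lipschitz continuity of $f \colon \mE \to \mH$ recorded after \eqref{opf}, this places \eqref{pb} in the standard framework for a semilinear parabolic evolution equation.

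Next I would run the Galerkin scheme with the eigenbasis $\{e_k\}_{k\ge 1}$ of $-A$, projecting \eqref{pb} onto $\mH_m = \Span\{e_1,\dots,e_m\}$ and solving the resulting finite ODE system locally in time by Picard--Lindel\"of. Testing the projected equations with $g^m$ in $\mH$ and using $\langle A g^m, g^m\rangle \le 0$, the only terms of $\langle f(g^m), g^m\rangle$ lacking a favorable sign are $a u_i^3$, $\beta u_i^2 v_i$, $q u_i w_i$ and $q c w_i$, each of which is controlled by the dissipative contribution $-b u_i^4$ and by a multiple of $\|v_i^m\|^2 + \|w_i^m\|^2$ through Young's inequality; this yields $\frac{d}{dt}\|g^m\|^2 \le C_1\|g^m\|^2 + C_2$ with $C_1,C_2$ independent of $m$, and hence, after also integrating the energy identity, uniform bounds for $g^m$ in $C([0,T];\mH)\cap L^2([0,T];\mE)$ for every $T>0$. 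To obtain compactness I would interpolate $L^\infty([0,T];L^2(\gw))\cap L^2([0,T];H^1(\gw))$ to control $u_i^m$ in $L^{2(n+2)/n}(\gw\times(0,T))$ for $n\le 3$, which bounds $f(g^m)$ in $L^p(\gw\times(0,T))$ componentwise for some $p>1$ and, via the $u_i$-equations, $\partial_t u_i^m$ in $L^p([0,T];Y)$ for a negative-order Sobolev space $Y\supset L^2(\gw)$; the Aubin--Lions lemma applied to the $u_i$-components (using $H^1(\gw)\hookrightarrow\hookrightarrow L^2(\gw)\hookrightarrow Y$, the first imbedding compact) then supplies a subsequence with $u_i^m\to u_i$ strongly in $L^2([0,T];L^2(\gw))$ and a.e., while $v_i^m\to v_i$ and $w_i^m\to w_i$ in $C([0,T];L^2(\gw))$ follow by solving their linear ODEs explicitly, and all components converge weakly in $L^2([0,T];\mE)$ and weak-$*$ in $L^\infty([0,T];\mH)$. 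Passing to the limit in the projected equations --- the polynomial nonlinearities handled by the a.e.\ convergence together with the uniform $L^p$ bound (Vitali) --- identifies the limit as a weak solution $(g_1,\dots,g_N)$, and the regularity \eqref{soln} is read off from these bounds and the equation.

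For uniqueness and continuous dependence I would take two weak solutions $g,\tg$ with the regularity \eqref{soln}, so $\phi = g - \tg$ satisfies $\phi\in C([0,\tau];\mH)\cap L^2([0,\tau];\mE)$ with $\partial_t\phi$ in the dual space, whence $t\mapsto\|\phi(t)\|^2$ is absolutely continuous by the Lions--Magenes lemma and $\frac12\frac{d}{dt}\|\phi\|^2 = \langle A\phi,\phi\rangle + \langle f(g)-f(\tg),\phi\rangle$ a.e. The key is the sign-definite identity
\[
	\langle u_i^3 - \tu_i^3,\, u_i - \tu_i\rangle = \int_{\gw} (u_i - \tu_i)^2 (u_i^2 + u_i \tu_i + \tu_i^2)\, dx \ge 0 ,
\]
which dominates $a(u_i^2-\tu_i^2)(u_i-\tu_i)$ up to a term $C\|u_i-\tu_i\|^2$; the remaining couplings $\beta(u_i^2-\tu_i^2)(v_i-\tv_i)$ and $q(u_i-\tu_i)(w_i-\tw_i)$ are controlled by the Sobolev imbedding $H^1(\gw)\hookrightarrow L^4(\gw)$ and Young's inequality, with the gradient terms absorbed into $\langle A\phi,\phi\rangle\le 0$. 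This gives $\frac{d}{dt}\|\phi\|^2 \le \eta(t)\|\phi\|^2$ with $\eta(t)\le C\big(1+\|u_i(t)\|_{H^1}^2+\|\tu_i(t)\|_{H^1}^2\big)\in L^1([0,\tau])$ by \eqref{soln}, so Gr\"onwall's inequality forces $\phi\equiv 0$ when the data agree and yields Lipschitz dependence of the solution in $\mH$ on the initial state in general. Finally, for $(g_1^0,\dots,g_N^0)\in\mE$ I would obtain \eqref{ss} either by repeating the a priori estimates one level higher --- pairing the Galerkin system with $-Ag^m$ (equivalently with $\partial_t g^m$) and running a uniform Gr\"onwall argument with the space-time bounds already in hand --- or by appealing directly to sectorial operator theory: since $e^{tA}$ is analytic, $\mE = D((-A)^{1/2})$, and $f\in C^{0,1}(\mE,\mH)$, the variation-of-constants equation $g(t) = e^{tA}(g_1^0,\dots,g_N^0) + \int_0^t e^{(t-s)A} f(g(s))\,ds$ has a unique local mild solution enjoying the parabolic smoothing \eqref{ss}, which coincides with the weak solution by uniqueness.

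The step I expect to be the main obstacle is the compactness argument: because the nonlinearity is cubic and the dimension may be $n=3$, a crude $\mE\to\mH$ bound gives no $L^p$-in-time control of $\partial_t g^m$ with $p>1$, so one is forced to exploit the parabolic interpolation $L^\infty_t L^2_x \cap L^2_t H^1_x \hookrightarrow L^{2(n+2)/n}_{t,x}$ to extract just enough integrability for Aubin--Lions; a secondary technical point is the legitimacy of the energy identity for $\|\phi\|^2$ at the regularity of weak solutions, which is precisely where \eqref{soln} and the Lions--Magenes lemma enter. Throughout, in both the existence and the uniqueness estimates, it is the sign of the cubic term $-b u_i^3$ that lets the argument close without any structural smallness condition on the parameters.
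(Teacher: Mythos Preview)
Your proposal is correct and follows exactly the approach the paper indicates: the paper does not actually give a proof of Proposition~\ref{pps} but only remarks that it ``can be proved by the Galerkin approximation method,'' and your sketch supplies precisely the standard details of that method---the variational form for $-A$, the eigenbasis Galerkin scheme, the energy estimate exploiting the sign of $-b u_i^4$, Aubin--Lions compactness via the parabolic interpolation, and the Gr\"onwall uniqueness argument. There is nothing further to compare; your write-up simply fills in what the paper leaves unproved.
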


The basics of infinite dimensional dynamical systems or called semiflow generated by the evolutionary differential equations are referred to \cite{CV, SY, Tm}. A key concept in global dynamics is the absorbing set defned below.

\begin{definition} \label{Dabsb}
	Let $\{S(t)\}_{t \geq 0}$ be a semiflow on a Banach space $\ms{X}$. A bounded set (usually a ball) $B^*$ of $\ms{X}$ is called an absorbing set of this semiflow, if for any given bounded set $B \subset \ms{X}$ there exists a finite time $T_B \geq 0$ depending on $B$, such that $S(t)B \subset B^*$ permanently for all $t \geq T_B$. 
\end{definition}

\section{\textbf{Absorbing Dynamics of the Solution Semiflow}}

We first prove the global existence of weak solutions in time for the problem \eqref{pb} and the existence of an absorbing set of this Hindmarsh-Rose neural network. 

\begin{theorem} \label{Tm}
	For any given initial state $(g_1^0, \cdots , g_N^0) \in \mathbb{H}$, there exists a unique global weak solution $(g_1 (t), \cdots, g_N (t)), \, t \in [0, \infty)$, of the initial value problem \eqref{pb} formulated from the initial-boundary value problem of the boundary coupled Hindmarsh-Rose neural network \eqref{cHR}-\eqref{inc}. 
\end{theorem}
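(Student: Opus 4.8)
The plan is to upgrade the local solution furnished by Proposition~\ref{pps} to a global one by establishing a uniform a priori bound for $\|(g_1(t),\dots,g_N(t))\|_{\mathbb H}$ on the maximal interval of existence: once such a bound is known, the standard continuation argument forces that interval to be $[0,\infty)$. First I would carry out the basic $\mathbb H$-energy estimate, taking the $L^2(\Omega)$ inner product of the first equation in \eqref{cHR} with $u_i$, of the second with $v_i$, of the third with $w_i$, and summing over $1\le i\le N$; for weak solutions these manipulations are legitimate either on the Galerkin approximants (with a subsequent passage to the limit) or directly from the regularity recorded in \eqref{soln}.

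The boundary coupling, which is the new ingredient of the model, enters only through the diffusion term: writing $d\,\langle \Delta u_i, u_i\rangle = -d\|\nabla u_i\|^2 + d\int_{\partial\Omega}\frac{\partial u_i}{\partial\nu}\,u_i\,dS$ and inserting the boundary condition \eqref{nbc}, the symmetry $\Gamma_{ij}=\Gamma_{ji}$ allows one to symmetrize the sum of boundary integrals into
\[
	d\sum_{i=1}^N\int_{\partial\Omega}\frac{\partial u_i}{\partial\nu}\,u_i\,dS \;=\; -\,\frac{dp}{2}\sum_{i,j=1}^{N}\int_{\Gamma_{ij}}(u_i-u_j)^2\,dS \;\le\; 0,
\]
so the coupling is dissipative and may simply be discarded. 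For the Hindmarsh--Rose reaction terms, $-b\,\langle u_i^3,u_i\rangle=-b\|u_i\|_{L^4}^4$ supplies the essential coercive dissipation, while $a\,\langle u_i^2,u_i\rangle$, the feedback terms $\pm\langle v_i,u_i\rangle$ and $\pm\langle w_i,u_i\rangle$, the constant drift $J$, and the sign-indefinite cubic term $-\beta\,\langle u_i^2,v_i\rangle$ are to be absorbed, via Young's inequality together with the Sobolev embedding $H^1(\Omega)\hookrightarrow L^6(\Omega)$ ($n\le3$), into a small fraction of $\|u_i\|_{L^4}^4$ plus lower-order quantities $\|v_i\|^2,\|w_i\|^2$ and constants; the $v_i$- and $w_i$-equations contribute the negative terms $-\|v_i\|^2$ and $-r\|w_i\|^2$. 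Setting $\Phi(t)=\sum_{i=1}^N\big(\|u_i\|^2+\lambda\|v_i\|^2+\mu\|w_i\|^2\big)$ with suitably small $\lambda,\mu>0$, and fixing the constants in the Young estimates in the appropriate order, one arrives at a differential inequality $\frac{d}{dt}\Phi + c\,\Phi \le C$ with positive $c,C$ independent of the initial data. Gronwall's lemma then yields $\Phi(t)\le \Phi(0)\,e^{-ct}+\tfrac{C}{c}$, which is the required uniform bound; the local solution therefore cannot leave $\mathbb H$ in finite time and extends to $[0,\infty)$ (and, as a by-product, this estimate already exhibits an absorbing ball for the semiflow).

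The step I expect to be the main obstacle is the balance between the single dissipative quartic term $b\,u_i^4$ and the sign-indefinite cubic cross term $\beta\,u_i^2 v_i$ together with the linear $v_i,w_i$ feedbacks in the $u_i$-equation: the weights $\lambda,\mu$ and the various $\varepsilon$'s appearing in Young's inequality must be chosen in the correct hierarchical order so that every harmful contribution is genuinely dominated by the dissipation. By contrast, the boundary-coupling term, although it is the new feature of the model, turns out to be the easy part because it is automatically dissipative.
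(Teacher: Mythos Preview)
Your proposal is correct and follows essentially the same route as the paper: energy estimate with weighted $L^2$ norms, the boundary term symmetrized into $-\tfrac{dp}{2}\sum_{i,j}\int_{\Gamma_{ij}}(u_i-u_j)^2\,dS\le 0$ and discarded, the cubic cross term $\beta u_i^2 v_i$ controlled by Young against the quartic $-b u_i^4$, and Gronwall to close. The only cosmetic difference is the placement of the weight: the paper multiplies the $u_i$-equation by $C_1 u_i$ with $C_1=(\beta^2+4)/b$ and keeps weight $1$ on $v_i,w_i$, whereas you keep weight $1$ on $u_i$ and take small $\lambda,\mu$ on $v_i,w_i$; dividing the paper's energy by $C_1$ shows these are the same scheme. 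One minor remark: the Sobolev embedding $H^1\hookrightarrow L^6$ is not needed for this a~priori bound---the pointwise Young inequality $|\beta u_i^2 v_i|\le \varepsilon u_i^4+\tfrac{\beta^2}{4\varepsilon}v_i^2$ already does the job, exactly as in the paper.
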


\begin{proof}
	Conduct and sum up the $L^2$ inner-products of the $u_i$-equation with $C_1 u_i(t)$ for $1 \leq i \leq N$, where the constant $C_1 > 0$ is to be chosen. We get		
	\begin{equation*}
	\begin{split}
	&\frac{C_1}{2} \frac{d}{dt} \sum_{i = 1}^N \|u_i\|^2 + C_1 d\, \sum_{i=1}^N \|\nb u_i\|^2  = -  d\,C_1p \, \sum_{i=1}^N \sum_{j=1}^N \int_{\G_{ij}} ( u_i - u_j)^2\, dx \\
	+ &\, \sum_{i=1}^N \int_\gw (C_1 (au_i^3 -bu_i^4  + u_i v_i - u_i w_i +Ju_i) \, dx  \\
	\leq &\, C_1 \int_\gw \left[ \sum_{i=1}^N (au_i^3 -bu_i^4  + u_i v_i - u_i w_i +Ju_i)\right] dx
	\end{split}
	\end{equation*}
by Green's divergence theorem and the coupling boundary condition \eqref{nbc}. Then sum up the $L^2$ inner-products of the $v_i$-equation with $v_i(t)$ for $1 \leq i \leq N$, we obtain
	\begin{equation*}
	\begin{split}
	&\frac{1}{2} \frac{d}{dt} \sum_{i=1}^N \| v_i\|^2 = \int_\gw \left[ \sum_{i=1}^N (\ap v_i - \gb u_i^2 v_i - v_i^2) \right]dx \\
	\leq &\int_\gw \left[ \sum_{i=1}^N (\ap v_i +\frac{1}{2} (\gb^2 u_i^4 + v_i^2) - v_i^2)\right] dx \leq \int_\gw \left[ N\ap^2 +\frac{1}{2} \gb^2 \sum_{i=1}^N u_i^4 - \frac{3}{8} \sum_{i=1}^N v_i^2 \right] dx,
	\end{split}
	\end{equation*}
and sum up the $L^2$ inner-products of the $w_i$-equation with $w_i (t)$ for $1 \leq i \leq N$, similarly we have
	\begin{equation*} 
	\begin{split}
	&\frac{1}{2} \frac{d}{dt} \sum_{i=1}^N \| w_i \|^2 = \int_\gw \left[ \sum_{i=1}^N (q (u_i - c)w_i - rw_i^2) \right] dx  \\
	\leq & \int_\gw \sum_{i=1}^N \left(\frac{q^2}{2r} (u_i - c)^2 + \frac{1}{2} r w_i^2 - r w_i^2\right) dx \leq \int_\gw \left[\frac{q^2}{r} \left(\sum_{i=1}^N u_i^2 + N c^2\right) - \frac{r}{2} \sum_{i=1}^N w_i^2 \right] dx.
	\end{split}
	\end{equation*}
	
Now we choose the positive constant $C_1 = \frac{1}{b} (\gb^2 + 4)$. Then 
	\beq \bl{C1u}
		\int_\gw (- C_1 b u_i^4)\, dx + \int_\gw (\gb^2 u_i^4)\, dx = \int_\gw (-4 u_i^4)\, dx, \quad i= 1, \cdots, N.
	\eeq
Using the Young's inequality, for $1 \leq i \leq N$, we have
	\beq \bl{C3u}
		\int_\gw C_1 au_i^3\, dx \leq \frac{3}{4} \int_\gw u_i^4\, dx + \frac{1}{4}\int_\gw (C_1 a)^4 \, dx \leq \int_\gw u_i^4\, dx + (C_1 a)^4 |\gw|, 
	\eeq
Moreover,
	\beq \bl{uvw}
	\begin{split}
	&C_1 \int_\gw \left( \sum_{i=1}^N (u_i v_i - u_i w_i + Ju_i)\right) dx \\
	\leq &\, \int_\gw \sum_{i=1}^N \left(2(C_1 u_i)^2 + \frac{1}{8} v_i^2 + \frac{(C_1 u_i)^2}{r} + \frac{1}{4} r w_i^2 + C_1 u_i^2 + C_1J^2 \right) dx 	
	\end{split}
	\eeq
	in which we can further deduce
	\beq \bl{ur}
	\begin{split}
	\int_\gw \sum_{i=1}^N \left[2(C_1 u_i)^2 + \frac{(C_1 u_i)^2}{r} + C_1 u_i^2\right] dx \leq \int_\gw \sum_{i=1}^N u_i^4 dx + N |\gw | \left[C_1^2 \left(2 +\frac{1}{r}\right) + C_1\right]^2.
	\end{split}
	\eeq
	Besides we have
	\beq \bl{uq}
	\int_\gw \frac{1}{r} q^2 \left(\sum_{i=1}^N u_i^2\right) dx \leq \int_\gw \left( \sum_{i=1}^N u_i^4\right) dx + \frac{q^4}{r^2}N |\gw|.
	\eeq
Substitute \eqref{C1u} -\eqref{uq} into the first three differential inequalities for sums of $u_i, v_i, w_i$ in this proof and then sum them up to obtain
\beq \label{g2}
	\begin{split}
		&\frac{1}{2} \frac{d}{dt} \left[C_1 \left(\sum_{i = 1}^N \|u_i\|^2\right) +  \left( \sum_{i=1}^N \| v_i\|^2\right) + \left(\sum_{i=1}^N \| w_i \|^2\right) \right] + C_1 d\, \left(\sum_{i=1}^N \|\nb u_i \|^2\right)  \\
		\leq &\, C_1 \int_\gw \left[ \sum_{i=1}^N (au_i^3 -bu_i^4  + u_i v_i - u_i w_i +Ju_i)\right] dx \\
		&+ \int_\gw \left[ N \ap^2 +\frac{1}{2} \gb^2 ( \sum_{i=1}^N u_i^4) - \frac{3}{8} (\sum_{i=1}^mNv_i^2) \right] dx \\[3pt]
		&+ \int_\gw \left[\frac{q^2}{r} \left(\sum_{i=1}^N u_i^2 + N c^2\right) - \frac{r}{2} \left(\sum_{i=1}^N w_i^2\right)\right] dx \\[3pt]
		\leq & \int_\gw (3 - 4)\left(\sum_{i=1}^N u_i^4 \right) dx + \int_\gw \left(\frac{1}{8} - \frac{3}{8}\right) \left(\sum_{i=1}^N v_i^2\right) dx + \int_\gw \left(\frac{1}{4} - \frac{1}{2} \right) r \left( \sum_{i=1}^N w_i^2 \right) dx \\[3pt]
		&+  \, N |\gw | \left( (C_1 a)^4 + C_1 J^2  + \left[C_1^2 \left(2 +\frac{1}{r}\right) + C_1\right]^2 + 2\ap^2 + \frac{q^2 c^2}{r} + \frac{q^4}{r^2} \right) \\[3pt]
		= &\, - \int_\gw \left( \sum_{i=1}^N u_i^4 + \frac{1}{4} \sum_{i=1}^N v_i^2 + \frac{r}{4} \sum_{i=1}^N w_i^2 \right) dx + \frac{1}{2} C_2 N |\gw |, 	\quad \;  t \in I_{max},
	\end{split}
\eeq
where the constant
$$
	C_2 = 2(C_1 a)^4 + 2C_1 J^2  + 2\left[C_1^2 \left(2 +\frac{1}{r}\right) + C_1\right]^2 + 4\ap^2 + \frac{2q^2 c^2}{r} + \frac{2q^4}{r^2}.
$$ 
From \eqref{g2} it follows that
	\beq \label{E1}
	\begin{split}
		&\frac{d}{dt} \left(C_1 \sum_{i = 1}^N \|u_i\|^2 + \sum_{i=1}^N \| v_i\|^2 + \sum_{i=1}^N \| w_i \|^2\right) \\
		+ \,2& \int_\gw \left(\sum_{i=1}^N u_i^4 + \frac{1}{4} \sum_{i=1}^N v_i^2 + \frac{r}{4} \sum_{i=1}^N w_i^2 \right) dx \leq C_2 N |\gw|, 
	\end{split}
	\eeq
for $t \in I_{max} = [0, T_{max})$, which is the maximal time interval of solution existence. Note that in the first part of the integral term of \eqref{E1} we have $ \frac{1}{4} \left(C_1 u_i^2 - \frac{C_1^2}{16}\right) \leq u_i^4, 1 \leq i \leq N$. Then \eqref{E1} yields the following differential inequality
	\beq \bl{E2}
	\begin{split}
	 &\frac{d}{dt} \left[C_1 \sum_{i = 1}^N \|u_i\|^2 +  \sum_{i=1}^N \| v_i\|^2 + \sum_{i=1}^N \| w_i \|^2 \right] \\
		&+ \, r^* \left[C_1  \sum_{i = 1}^N \|u_i\|^2 + \sum_{i=1}^N \| v_i\|^2 + \sum_{i=1}^N \| w_i \|^2 \right]  \\
	&\leq \frac{d}{dt} \left[C_1 \sum_{i = 1}^N \|u_i\|^2 +  \sum_{i=1}^N \| v_i\|^2 + \sum_{i=1}^N \| w_i \|^2 \right] \\
		&+ \, \frac{1}{2} \int_\gw \left( \sum_{i=1}^N u_i^2 +  \sum_{i=1}^N v_i^2 + r \sum_{i=1}^N w_i^2 \right) dx \leq \left(C_2 + \frac{C_1^2}{32}\right) N|\gw |,
	\end{split}
	\eeq
where $r^* = \frac{1}{2} \min \{1, r\}$. Apply the Gronwall inequality to \eqref{E2}. Then we obtain the following bounding estimate of the weak solutions,
	\beq \label{dse}
	\begin{split}
		\sum_{i=1}^N \|g_i (t, g_i^0)\|^2 &= \sum_{i=1}^N \left[ \|u_i (t)\|^2 + \| v_i (t)\|^2 + \| w_i (t) \|^2\right] \\
		&\leq \frac{\max \{C_1, 1\}}{\min \{C_1, 1\}}e^{- r^* t} \sum_{i=1}^N \|g_i^0\|^2 + \frac{M}{\min \{C_1, 1\}} |\gw | \\
		&\leq \frac{\max \{C_1, 1\}}{\min \{C_1, 1\}} \sum_{i=1}^N \|g_i^0\|^2 + \frac{M}{\min \{C_1, 1\}} |\gw |
	\end{split}
	\eeq 
for $t \in I_{max} = [0, T_{max}) = [0, \infty)$, where 
\beq \bl{M}
	M = \frac{N}{r^*}\left(C_2 + \frac{C_1^2}{32}\right).
\eeq
The uniform bound estimate \eqref{dse} on the time interval of solution existence shows that the weak solution $g_i(t, x), 1 \leq i \leq N,$ will never blow up at any finite time. Therefore, for any initial data in $\mH$, the weak solution of the initial value problem \eqref{pb} of this neural network \eqref{cHR}-\eqref{inc} exists in $\mH$ for $t \in [0, \infty)$.
\end{proof}

The global existence and uniqueness of the weak solutions and their continuous dependence on the initial data enable us to define the solution semiflow $\{S(t): \mH \to \mH\}_{t \geq 0}$ of this boundary coupled Hindmarsh-Rose neuron network system \eqref{cHR}-\eqref{inc} on the space $\mH$ to be
\beq \bl{HRS}
	S(t): (g_1^0, \cdots, g_N^0) \longmapsto (g_1(t, g_1^0), \cdots, g_N(t, g_N^0)), \quad  t \geq 0.
\eeq
We call this semiflow $\{S(t)\}_{t \geq 0}$ the \emph{boundary coupling Hindmarsh-Rose semiflow}. The next result is the absorbing property of this semiflow which is leveraged to show the asymptotic synchronization of this complex neural network in the next section.

\begin{theorem} \label{Hab}
	There exists an absorbing set for the boundary coupling Hindmarsh-Rose semiflow $\{S(t)\}_{t \geq 0}$ in the space $\mH$, which is the bounded ball 
\beq \label{abs}
	B^* = \{ h \in \mH: \| h \|^2 \leq Q\}
\eeq 
where the constant
\beq \bl{Q}
	Q =  \frac{2M |\gw |}{\min \{C_1, 1\}} =  \frac{2N }{r^*\min \{C_1, 1\}}\left(C_2 + \frac{C_1^2}{32}\right) |\gw |.
\eeq
\end{theorem}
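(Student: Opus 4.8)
The plan is to read the absorbing property straight out of the uniform dissipative estimate \eqref{dse}, which was already derived in the proof of Theorem~\ref{Tm}. The genuine analytic work — the energy estimate with the weight $C_1 = \frac1b(\beta^2+4)$, the Young-inequality absorption of all the reaction and coupling cross terms, and the use of Green's divergence theorem together with the boundary condition \eqref{nbc} to produce the sign-definite boundary contribution — is already behind us; what is left is essentially unpacking Definition~\ref{Dabsb}. So I do not expect a real obstacle here: the statement is a one-step consequence of the exponentially decaying bound.

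Concretely, I would proceed as follows. Recall from \eqref{E2}, with the Lyapunov-type quantity $y(t) = C_1\sum_{i=1}^N\|u_i(t)\|^2 + \sum_{i=1}^N\|v_i(t)\|^2 + \sum_{i=1}^N\|w_i(t)\|^2$, that $\frac{dy}{dt} + r^* y \leq (C_2 + \frac{C_1^2}{32})N|\gw|$ for all $t \geq 0$, where $r^* = \frac12\min\{1,r\}$. The Gronwall inequality then gives
\[
y(t) \leq e^{-r^* t} y(0) + M|\gw|\bigl(1 - e^{-r^* t}\bigr) \leq e^{-r^* t} y(0) + M|\gw|,
\]
with $M = \frac{N}{r^*}(C_2 + \frac{C_1^2}{32})$ as in \eqref{M}, and dividing by $\min\{C_1,1\}$ while bounding $y(0) \leq \max\{C_1,1\}\sum_i\|g_i^0\|^2$ reproduces exactly \eqref{dse}. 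Now let $B \subset \mH$ be an arbitrary bounded set and put $\rho = \sup\{\sum_{i=1}^N\|g_i^0\|^2 : (g_1^0,\dots,g_N^0)\in B\} < \infty$. From \eqref{dse},
\[
\sum_{i=1}^N \|S(t)(g_1^0,\dots,g_N^0)\|^2 \leq \frac{\max\{C_1,1\}}{\min\{C_1,1\}}\, e^{-r^* t}\rho + \frac{M|\gw|}{\min\{C_1,1\}}.
\]
The first term tends to $0$, hence it drops below $\frac{M|\gw|}{\min\{C_1,1\}}$ as soon as $e^{-r^*t}\max\{C_1,1\}\rho \leq M|\gw|$, that is, once $t \geq T_B := \max\bigl\{0,\ \frac{1}{r^*}\ln\frac{\max\{C_1,1\}\,\rho}{M|\gw|}\bigr\}$. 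For all such $t$ we obtain $\sum_i\|S(t)(g_1^0,\dots,g_N^0)\|^2 \leq \frac{2M|\gw|}{\min\{C_1,1\}} = Q$, i.e. $S(t)B \subset B^*$ for every $t \geq T_B$. Since $B$ was arbitrary, $B^*$ from \eqref{abs}–\eqref{Q} is an absorbing set for $\csg$ in $\mH$.

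The only fine points worth flagging are cosmetic rather than substantive: the factor $2$ in the definition \eqref{Q} of $Q$ is inserted precisely so that the inclusion $S(t)B \subset B^*$ is \emph{permanent} for $t \geq T_B$ and not merely asymptotic, since the true asymptotic bound from \eqref{dse} is $\tfrac{M|\gw|}{\min\{C_1,1\}}$, exactly half of $Q$; and the $\max\{0,\cdot\}$ in $T_B$ simply covers the degenerate case $\rho \leq M|\gw|/\max\{C_1,1\}$, in which $B$ already sits inside $B^*$ at time $0$. No parabolic smoothing is needed because the claim is only for the $\mH$-topology; an absorbing set in the stronger space $\mE$ would be a separate matter.
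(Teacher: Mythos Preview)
Your proof is correct and follows essentially the same approach as the paper: both read the absorbing property directly out of the Gronwall estimate \eqref{dse} from Theorem~\ref{Tm}, and your explicit entry time $T_B = \max\bigl\{0,\ \frac{1}{r^*}\ln\frac{\max\{C_1,1\}\,\rho}{M|\gw|}\bigr\}$ coincides with the paper's $T_0(B) = \frac{1}{r^*}\log^+\bigl(\rho\,\frac{\max\{C_1,1\}}{M|\gw|}\bigr)$. Your additional remarks on the role of the factor $2$ in $Q$ and the $\max\{0,\cdot\}$ are accurate clarifications.
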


\begin{proof}
This is the consequence of the uniform estimate \eqref{dse} in Theorem \ref{Tm} because
	\beq \label{lsp}
	\limsup_{t \to \infty} \, \sum_{i=1}^N \|g_i(t, g_i^0)\|^2 < Q = \frac{2M |\gw |}{\min \{C_1, 1\}} 
	\eeq
	for all weak solutions of \eqref{pb} with any initial data $(g_1^0, \cdots, g_N^0)$ in $\mH$. Moreover, for any given bounded set $B = \{h \in \mH: \|h \|^2 \leq \rho \}$ in $\mH$, there exists a finite time 
	\beq \label{T0B}
	T_0 (B) = \frac{1}{r^*} \log^+ \left(\rho \, \frac{\max \{C_1, 1\}}{M |\gw|}\right)
	\eeq
	such that all the solution trajectories started from the set $B$ will permanently enter the bounded ball $B^*$ shown in \eqref{abs} for $t \geq T_0(B)$.  According to Definition \ref{Dabsb}, the theorem is proved.
\end{proof}

\section{\textbf{Synchronization of the Hindmarsh-Rose Neuron Neuwork}} 

We make a definition to describe synchronization dynamics for mathematical models of biological neural networks.
\begin{definition} \bl{DaD}
	For the dynamical system generated by a model differential equation such as \eqref{pb} for a neural network with whatever type of coupling, define the \emph{asynchronous degree} in a state space $\ms{X}$ to be
	$$
	deg_s (\ms{X})= \sum_{j} \sum_{k} \, \sup_{g_j^0, \, g_k^0 \in \ms{X}} \, \left\{\limsup_{t \to \infty} \, \|g_j (t) - g_k(t)\|_{\ms{X}}\right\},
	$$ 
where $g_j(t)$ and $g_k(t)$ are any two solutions of the model differential equation for two neurons with the initial states $g_j^0$ and $g_k^0$, respectively. Then the neural network is said to be asymptotically synchronized in the space $\ms{X}$, if $deg_s (\ms{X}) = 0$.
\end{definition}

We shall prove the main result on the asymptotic synchronization of the boundary coupled Hindmarsh-Rose neural network described by \eqref{cHR}-\eqref{inc} in the space $H$. This result provides a quantitative threshold for the boundary coupled Hindmarsh-Rose neural network to reach synchronization.

Denote by $U_{ij} (t) = u_i(t) - u_j (t), V_{ij} (t) = v_i(t) - v_j(t), W_{ij} (t) = w_i(t) - w_i(t)$, for $i, j = 1, \cdots, N$. For any given initial states $g_i^0, \cdots, g_N^0$ in the space $H$, the difference between the solutions of the modrel equation \eqref{pb} associated with the coupled (or uncoupled) neurons $\mathcal{N}_i$ and $\mathcal{N}_j$ is
	$$
	g_i (t, g_i^0) - g_j (t, g_j^0) = \text{col}\, (U_{ij}(t), V_{ij}(t), W_{ij}(t)), \quad t \geq 0.
	$$
	
	By subtraction of the corresponding three equations of the $j$-th neuron from the corresponding equations of the $i$-th neuron in \eqref{cHR}, we obtain the \emph{differencing} Hindmarsh-Rose equations as follows. For $i, j = 1, \cdots, N$,
\beq \bl{dHR}
	\begin{split}
		\frac{\pdr U_{ij}}{\pdr t} & = d \gd U_{ij} +  a(u_i + u_j)U_{ij} - b(u_i^2 + u_i u_j + u_j^2)U_{ij} + V_{ij} - W_{ij},  \\
		\frac{\pdr V_{ij}}{\pdr t} & =  - V_{ij} - \beta (u_i +  u_j)U_{ij},    \\
		\frac{\pdr W_{ij}}{\pdr t} & = q U_{ij} - r W_{ij}.
	\end{split}
\eeq

Here is the main result on the synchronization of the complex Hindmarsh-Rose neural network with boundary coupling.
\begin{theorem} \bl{ThM}
	If the following threshold condition for stimulation signal strength of the boundary coupled Hindmarsh-Rose neural network is satisfied,  
\beq \bl{SC}
	p\, \liminf_{t \to \infty}\,\sum_{1 \leq i < j \leq N} \int_{\G_{ij}} U_{ij}^2(t, x)\, dx > R \, |\gw|,   
\eeq
for any given initial conditions $(g_1^0, \cdots, g_N^0) \in \mathbb{H}$, where the constant $R > 0$ is 
\beq \bl{R}
	R = \frac{N^2 (N-1)}{r^* \min \{C_1, 1\}}\left[\frac{C_1^2}{16} + 2C_2\right] \left[\eta_2\, d \,|\gw | + \left[\frac{8\beta^2}{b} + \frac{2a^2}{b} + \frac{b}{16\beta^2 r} \left[q - \frac{8\beta^2}{b}\right]^2\right] \right]
\eeq
with $C_1 = \frac{1}{b} (\beta^2 + 4)$, $\eta_2 > 0$ being the constant in the generalized Poincar\'{e} inequality \eqref{Pcr}, and
\beq  \bl{C2}
	C_2 = 2(C_1 a)^4 + 2C_1 J^2  + 2\left[C_1^2 \left(2 +\frac{1}{r}\right) + C_1\right]^2 + 4\ap^2 + \frac{2q^2 c^2}{r} + \frac{2q^4}{r^2},
\eeq
then the boundary coupled Hindmarsh-Rose neural network modeled by \eqref{pb} is asymptotically synchronized in the space $H$ at a uniform exponential rate.
\end{theorem}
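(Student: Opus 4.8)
The plan is to run a single Lyapunov-type energy estimate on the \emph{differencing} system \eqref{dHR} and show that, once the threshold \eqref{SC} is in force, the combined energy
$$
Q(t) = \sum_{1 \le i < j \le N}\Big(C_1\|U_{ij}(t)\|^2 + \|V_{ij}(t)\|^2 + \|W_{ij}(t)\|^2\Big),\qquad C_1 = \tfrac1b(\beta^2+4),
$$
(the same weight as in Theorem \ref{Tm}) decays to zero at the exponential rate $r^*$. Since $\|g_i(t,g_i^0) - g_j(t,g_j^0)\|_H^2 \le \max\{1/C_1,1\}\,Q(t)$ for every pair, this forces $deg_s(H) = 0$ with a uniform exponential rate in the sense of Definition \ref{DaD}. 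The estimates below are carried out on strong solutions (Proposition \ref{pps}) and then passed to the limit for general data in $\mathbb{H}$.

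First I would take the $L^2(\gw)$ inner products of the three equations of \eqref{dHR} with $C_1 U_{ij}$, $V_{ij}$, $W_{ij}$ respectively and sum over all pairs $i<j$. The decisive structural point is Green's formula applied to $d\gd U_{ij}$: applying \eqref{nbc} to $\mathcal{N}_i$ and $\mathcal{N}_j$ on the shared piece $\G_{ij} = \G_{ji}$ gives $\pdr U_{ij}/\pdr \nu = -2p\,U_{ij}$ on $\G_{ij}$, so the boundary contribution over $\G_{ij}$ is the dissipative quantity $-2C_1 dp\sum_{i<j}\int_{\G_{ij}} U_{ij}^2\,dx$ — exactly the stimulation term in \eqref{SC} — together with the Dirichlet term $-C_1 d\sum_{i<j}\|\nb U_{ij}\|^2$. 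The contributions of $\pdr U_{ij}/\pdr\nu$ over the remaining boundary pieces $\G_{ik}$, $k \neq j$, must be carried along and controlled by Cauchy--Schwarz; together with the generalized Poincar\'e inequality \eqref{Pcr} I expect them to generate the summand $\eta_2\,d\,|\gw|$ inside $R$ in \eqref{R}.

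Next I would absorb the nonlinear couplings. The cubic term contributes $-bC_1\int_\gw (u_i^2 + u_iu_j + u_j^2)U_{ij}^2\,dx \le 0$, and since $u_i^2 + u_iu_j + u_j^2 \ge 0$ pointwise with $(u_i+u_j)^2 \le 4(u_i^2+u_iu_j+u_j^2)$, this nonpositive term swallows the indefinite quadratic term $aC_1\int_\gw(u_i+u_j)U_{ij}^2\,dx$ and, after a Young splitting, the cross term $-\beta\int_\gw(u_i+u_j)U_{ij}V_{ij}\,dx$ from the $V_{ij}$-equation — crucially this uses no $L^\infty$ or $H^1$ bound on $u_i$, only the sign of the Hindmarsh--Rose cubic. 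The scalar couplings $C_1\langle V_{ij}-W_{ij},U_{ij}\rangle$ and $q\langle U_{ij},W_{ij}\rangle$ are then handled by Young's inequality against the linear dissipation $-\|V_{ij}\|^2$ and $-rC_1^{-1}\|W_{ij}\|^2$; completing the square in $a,b,\beta,q,r$ is what produces the bracket $\frac{8\beta^2}{b} + \frac{2a^2}{b} + \frac{b}{16\beta^2 r}\big(q - \frac{8\beta^2}{b}\big)^2$ of \eqref{R}. Using, as in \eqref{E2}, the elementary bound $\tfrac14(C_1 u_i^2 - C_1^2/16) \le u_i^4$ to recover a full $-r^*Q$ on the left, and invoking Theorem \ref{Hab} so that $\sum_i\|u_i(t)\|^2$ is eventually bounded by the absorbing radius \eqref{Q} (which converts the residual $u_i$-dependent terms into the explicit constant carrying the prefactor $\frac{N^2(N-1)}{r^*\min\{C_1,1\}}[\frac{C_1^2}{16}+2C_2]$), I would arrive at a differential inequality of the form
$$
\frac{d}{dt}Q(t) + r^* Q(t) + 2C_1 dp\sum_{1 \le i < j \le N}\int_{\G_{ij}} U_{ij}^2(t,x)\,dx \;\le\; 2C_1 d\,R\,|\gw|
$$
for all sufficiently large $t$, with $R$ as in \eqref{R}.

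Finally, the hypothesis \eqref{SC} gives $T$ and $\delta>0$ with $p\sum_{i<j}\int_{\G_{ij}}U_{ij}^2(t)\,dx \ge R|\gw| + \delta$ for all $t \ge T$; substituting into the displayed inequality yields $\frac{d}{dt}Q(t) + r^*Q(t) \le 0$ for $t \ge T$, hence $Q(t) \le Q(T)\,e^{-r^*(t-T)}$, so $\|g_i(t,g_i^0) - g_j(t,g_j^0)\|_H \to 0$ uniformly at the exponential rate $r^*/2$ for every pair, which is the asserted asymptotic synchronization in $H$. The step I expect to be the main obstacle is the bookkeeping in the two middle stages: getting the boundary terms over the non-shared pieces $\G_{ik}$ and all the Young-inequality residuals to line up exactly into the stated constant $R$, and verifying that $r^*Q$ can genuinely be extracted on the left with the same $r^* = \tfrac12\min\{1,r\}$ — it is the favorable sign of the cubic nonlinearity that makes this possible, and that is the part most likely to need care.
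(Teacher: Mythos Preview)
Your overall architecture (energy estimate on the differencing system, use the absorbing set to bound residual $\|U_{ij}\|^2$-terms, then invoke \eqref{SC} and Gronwall) is the same as the paper's. But two of the working mechanisms are misidentified, and as written the argument would not close with the stated constant $R$ or rate. First, the correct weight on $U_{ij}$ is $G=8\beta^2/b$, not $C_1=(\beta^2+4)/b$: it is the choice $G=8\beta^2/b$ that makes the cross term $-\beta(u_i+u_j)U_{ij}V_{ij}$ absorbable by $-\tfrac{Gb}{4}(u_i^2+u_j^2)U_{ij}^2$ with room to spare, and it is exactly this $G$ that produces the bracket $\tfrac{8\beta^2}{b}+\tfrac{2a^2}{b}+\tfrac{b}{16\beta^2 r}(q-\tfrac{8\beta^2}{b})^2$ in \eqref{R} (this bracket is $G+\tfrac{2a^2}{b}+\tfrac{1}{2rG}(q-G)^2$). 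With $C_1$ you would get a different constant. Second, and more seriously, the damping in $\|U_{ij}\|^2$ does \emph{not} come from any quartic bound of the type $\tfrac14(C_1u_i^2-C_1^2/16)\le u_i^4$: the differencing equations contain no $U_{ij}^4$ term, so that device from Theorem~\ref{Tm} is simply unavailable here. The paper instead extracts the $U$-damping from the gradient via the generalized Poincar\'e inequality \eqref{Pcr}: $dG\|\nabla U_{ij}\|^2\ge \eta_1 dG\|U_{ij}\|^2-\eta_2 dG(\int_\Omega U_{ij})^2$, the $\eta_1$-part gives the damping (so the rate is $\mu=\min\{2\eta_1 d,1,r\}$, not $r^*$), and the $\eta_2$-part, bounded by $\eta_2 dG|\Omega|\,\|U_{ij}\|^2$, is the origin of the summand $\eta_2 d|\Omega|$ in $R$.

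The other gap is the boundary term. Your computation $\partial U_{ij}/\partial\nu=-2pU_{ij}$ on $\Gamma_{ij}$ is correct, but the ``Cauchy--Schwarz plus Poincar\'e'' plan for the remaining pieces $\Gamma_{ik}$, $k\neq j$, is neither what the paper does nor is it clear it could be made to work: those cross terms are boundary integrals of products $U_{ik}U_{ij}$ (with the $j$-decomposition intervening for $u_j$) and do not reduce to anything controlled by $\eta_2 d|\Omega|$. The paper keeps the full boundary contribution as $K_{ij}=\sum_k\int_{\Gamma_{ik}}(u_i-u_k)U_{ij}-\sum_k\int_{\Gamma_{jk}}(u_j-u_k)U_{ij}$ and then, \emph{after} summing over all pairs, shows the exact algebraic identity $\sum_{i,j}K_{ij}=\sum_{i,j}\int_\Gamma(u_i-u_j)^2\,dx$ by writing $\sum_k\psi_{ik}(u_i-u_k)=u_i-\widetilde u_i$ with $\widetilde u_i=\sum_k\psi_{ik}u_k$ and observing that the antisymmetric piece $\sum_{i,j}\int_\Gamma(\widetilde u_i-\widetilde u_j)(u_i-u_j)$ vanishes. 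This exact cancellation is the key structural step; without it you cannot isolate the clean dissipative quantity $p\sum_{i<j}\int_{\Gamma_{ij}}U_{ij}^2$ (as a lower bound for $p\sum_{i<j}\int_\Gamma U_{ij}^2$) that matches the threshold \eqref{SC}.
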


\begin{proof}
	We go through three steps to estimate the solutions of the differencing equations \eqref{dHR} and prove this result.
	
	Step 1. Take the $L^2$ inner-products of the first equation in \eqref{dHR} with $GU_{ij} (t)$, the second equation with $V_{ij} (t)$, and the third equation with $W_{ij} (t)$, where $G > 0$ is to be chosen. Then sum them up and use Young's inequalities to get
\beq \bl{eG}
	\begin{split}
		&\frac{1}{2} \frac{d}{dt} (G\|U_{ij} (t)\|^2 + \|V_{ij} (t)\|^2 + \|W_{ij} (t)\|^2) + d G \|\nb U_{ij} (t)\|^2  + \|V_{ij} (t)\|^2 + r\, \|W_{ij} (t)\|^2 \\[6pt]
		= &\,\int_{\G} GU_{ij} \frac{\pdr U_{ij}}{\pdr \nu} \, dx +  \int_\gw G \left[a(u_i + u_j)U_{ij}^2 - b(u_i^2 + u_i u_j + u_j^2) U_{ij}^2\right] dx \\[5pt]
		+ &\, \int_\gw \left[ G U_{ij} V_{ij} -\beta (u_i +  u_j)U_{ij} V_{ij}+ (q - G)U_{ij} W_{ij} \right] dx \\[5pt]
		\leq &\,\int_{\G} GU_{ij} \frac{\pdr U_{ij}}{\pdr \nu}\, dx + \left[ G^2 + \frac{1}{2r} (q - G)^2\right] \|U_{ij} (t)\|^2 + \frac{1}{4}\|V_{ij} (t)\|^2 + \frac{r}{2}\|W_{ij} (t)\|^2 \\[5pt]
		+ &\int_\gw \left[G a(u_i + u_j)U_{ij}^2  -\beta (u_i +  u_j)U_{ij} V_{ij} - G b\,(u_i^2 + u_i u_j + u_j^2) U_{ij}^2 \right] dx, \quad t > 0.
		\end{split}
\eeq

By the the boundary coupling condition \eqref{nbc}, the boundary integral in \eqref{eG} turns out to be
\beq \bl{bdt}
	\begin{split}
	\int_{\G} GU_{ij} \frac{\pdr U_{ij}}{\pdr \nu}\, dx &= -\, Gp \left[\sum_{k = 1}^N  \int_{\G_{ik}}\, (u_i - u_k) U_{ij} \, dx - \sum_{k=1}^N \int_{\G_{jk} }\, (u_j - u_k)] U_{ij} \, dx\right] \\
	&= -Gp K_{ij},
	\end{split}
\eeq	
where we set the quantity
\beq \bl{K}
	K_{ij} = \sum_{k = 1}^N \int_{\G_{ik}} (u_i - u_k) (u_i - u_j) \, dx - \sum_{k = 1}^N \int_{\G_{jk}}  (u_j - u_k) (u_i - u_j)] \,dx.
\eeq

Then we estimate another integral term on the right-hand side of \eqref{eG}, 
\beq \bl{nlt}
	\begin{split}
		&\int_\gw \left(G a(u_i + u_j)U_{ij}^2  -\beta (u_i +  u_I)U_{ij} V_{ij} - G b\,(u_i^2 + u_i u_j + u_j^2) U_{ij}^2 \right) dx \\
		\leq &\, \int_\gw \left(G a(u_i + u_j)U_{ij}^2  - \beta (u_i +  u_j)U_{ij}V_{ij} - \frac{G b}{2}(u_i^2 + u_j^2) U_{ij}^2 \right) dx \\
		\leq &\, \int_\gw \left(G a(u_i + u_j)U_{ij}^2  + 2\beta^2 (u_i^2 +  u_j^2)U_{ij}^2 + \frac{1}{4} V_{ij}^2 - \frac{G b}{2}(u_i^2 + u_j^2) U_{ij}^2 \right) dx.
	\end{split}
\eeq
Now we choose the constant multiplier $G$ to be
	\beq \bl{lbd}
	G = \frac{8 \beta^2}{b} > 0.
	\eeq
By this choice, \eqref{nlt} is reduced to
	\beq \bl{me}
	\begin{split}
		&\int_\gw \left(G a\,(u_i + u_j)U_{ij}^2  -\beta (u_i +  u_j)U_{ij} V_{ij} - G b\,(u_i^2 + u_i u_j + u_j^2) U_{ij}^2 \right) dx \\
		\leq &\, \int_\gw \left(G a(u_i + u_j)U_{ij}^2 + \frac{1}{4} V_{ij}^2 - \frac{G b}{4}(u_i^2 + u_j^2) U_{ij}^2 \right) dx \\
		= &\, \frac{1}{4} \|V_{ij} (t)\|^2 +  \int_\gw \left( a(u_i + u_j) - \frac{b}{4}(u_i^2 + u_j^2) \right) GU_{ij}^2\, dx \\
		= &\, \frac{1}{4} \|V_{ij} (t)\|^2 +  \int_\gw \left[\frac{2a^2}{b} - \left(\frac{a}{b^{1/2}} - \frac{b^{1/2}}{2}\, u_i \right)^2 - \left(\frac{a}{b^{1/2}} - \frac{b^{1/2}}{2}\, u_j \right)^2 \right] GU_{ij}^2\, dx \\
		\leq &\, \frac{1}{4} \|V_{ij} (t)\|^2 + \frac{2G a^2}{b} \|U_{ij} (t)\|^2.
	\end{split}
	\eeq

Substitute the estimates \eqref{bdt} for the boundary integral and \eqref{me} for the domain integral into \eqref{eG}. We obtain
\begin{equation} \bl{meq}
	\begin{split}
	&\frac{1}{2} \frac{d}{dt} (G \|U_{ij} (t)\|^2 + \|V_{ij} (t)\|^2 + \|W_{ij} (t)\|^2) + GP K_{ij} \\
	&+ dG  \|\nb U_{ij} (t)\|^2 + \frac{1}{2}\|V_{ij} (t)\|^2 + \frac{r}{2}\, \|W_{ij} (t)\|^2 \\
	\leq &\, \left(G^2 + \frac{2G a^2}{b} + \frac{1}{2r} (q - G)^2\right) \|U_{ij} (t)\|^2, \quad t > 0, \quad 1 \leq i, j \leq N.
	\end{split}
\end{equation}

Step 2. We now utilize the generalized Poincar\'{e} inequality \cite{S} to treat the gradient term on the left-hand side of \eqref{meq}.  According to the generalized Poincar\'{e} inequality, there exist positive constants $\eta_1$ and $\eta_2$ depending only on the spatial domain $\gw$ and its dimension such that 
\beq \bl{Pcr}
	\eta_1 \|U_{ij} (t)\|^2 \leq \|\nb U_{ij} (t)\|^2 + \eta_2 \left(\int_\gw U_{ij} (t, x)\, dx\right)^2, \quad 1 \leq i, j \leq N.
\eeq
On the other hand, \eqref{Q} and \eqref{lsp} confirm that 
\beq \bl{Lsup}
	\limsup_{t \to \infty}\, \sum_{i=1}^N \|g_i(t, g_i^0)\|^2 < Q = \frac{2N}{r^* \min \{C_1, 1\}}\left(C_2 + \frac{C_1^2}{32}\right) |\gw |.
\eeq	
Also note that for all $1 \leq i, j \leq N$,
$$
	 \|U_{ij} (t)\|^2 \leq  2 (\|u_i (t)\|^2 + \|u_j (t)\|^2) \leq 2 \sum_{i=1}^N \|g_i(t, g_i^0)\|^2.
$$
Thus for any given bounded set $B \subset H$ and any initial data $g_i^0, g_j^0 \in B, 1 \leq i, j \leq N$, there is a finite time $T_B > 0$ depending on $B$ only such that
\beq \bl{gbd}
	\|U_{ij} (t)\|^2 \leq 2Q = \frac{4N}{r^* \min \{C_1, 1\}}\left(C_2 + \frac{C_1^2}{32}\right) |\gw |, \quad \text{for} \; t > T_B.
\eeq
By \eqref{meq}, \eqref{Pcr} and \eqref{gbd}, for any given bounded set $B \subset H$ and any initial data $g_i^0, g_j^0 \in B, 1 \leq i, j \leq N$, there exists a finite time $T_B > 0$ such that for all $t > T_B$ we have
\beq \bl{Keq}
	\begin{split}
	& \frac{d}{dt} (G \|U_{ij} (t)\|^2 + \|V_{ij} (t)\|^2 + \|W_{ij} (t)\|^2) + 2Gp \,K_{ij} \\[6pt]
	& + 2\eta_1 dG\, \|U_{ij} (t)\|^2 + \|V_{ij} (t)\|^2 + r \|W_{ij} (t)\|^2 \\[2pt]
	\leq &\, 2\eta_2\, dG \left(\int_\gw U_{ij} (t, x)\, dx\right)^2 + 2 \left(G^2 + \frac{2G a^2}{b} + \frac{1}{2r} (q - G)^2\right) \|U_{ij} (t)\|^2  \\[3pt]
	\leq &\, 2\eta_2\, dG \|U_{ij} (t)\|^2 |\gw | + 2\left(G^2 + \frac{2G a^2}{b} + \frac{1}{2r} (q - G)^2\right) \|U_{ij} (t)\|^2 \\
	= &\, 2\|U_{ij} (t)\|^2 \left[ \eta_2\, dG |\gw | + \left(G^2 + \frac{2G a^2}{b} + \frac{1}{2r} (q - G)^2\right)  \right] \\
	\leq &\, \frac{4N}{r^* \min \{C_1, 1\}}\left(2C_2 + \frac{C_1^2}{16}\right) |\gw | \left[\eta_2\, dG |\gw | + \left[G^2 + \frac{2G a^2}{b} + \frac{1}{2r} (q - G)^2\right] \right].
	\end{split}
\eeq
The differential inequality \eqref{Keq} is exactly written as 
\beq  \bl{Synq}
	\begin{split}
	& \frac{d}{dt} (G \|U_{ij} (t)\|^2 + \|V_{ij} (t)\|^2 + \|W_{ij} (t)\|^2) + 2Gp \,K_{ij} \\[3pt]
	+ 2 \eta_1 dG &\, \|U_{ij} (t)\|^2 + \|V_{ij} (t)\|^2 + r \|W_{ij} (t)\|^2 \leq 4GR\,|\gw |,  \quad  t > T_B.
	\end{split}
\eeq
In \eqref{Synq}, $G = 8\beta^2/b$ in \eqref{lbd} and the constant $R$ given in \eqref{R} are independent of initial data. 

Step 3. Now we need to treat the key term $2Gp K_{ij}$ in the differential inequality \eqref{Synq}. Let $\psi_{ik}$ be the characteristic function on the boundary piece $\G_{ik}, 1 \leq i,k \leq N$,
$$
	\psi_{ik} (x) = \begin{cases}
		1, \quad & \text{if $x \in \G_{ik}$} , \\
		0, \quad & \text{if $x \in \G \backslash \G_{ik}$.}
		\end{cases} 
$$	
From \eqref{K} we see that 
\begin{equation} \bl{EK}
	\begin{split}
	&\sum_{i,j} K_{ij} = \sum_{i, j} \sum_{k = 1}^N \int_{\G_{ik}} (u_i - u_k) (u_i - u_j) \, dx - \sum_{i,j} \sum_{k = 1}^N \int_{\G_{jk}}  (u_j - u_k) (u_i - u_j) \,dx \\
	= &\,  \sum_{i,j} \sum_{k = 1}^N\int_{\G} \psi_{ik} (u_i - u_k) (u_i - u_j)\, dx -  \sum_{i,j} \sum_{k = 1}^N \int_{\G} \psi_{jk} (u_j - u_k) (u_i - u_j)\, dx \\
	= &\, \sum_{i, j} \int_{\G} \left(\sum_{k=1}^N \psi_{ik}(u_i -u_k)\right) (u_i - u_j) dx  - \sum_{i, j} \int_{\G} \left(\sum_{k=1}^N \psi_{jk} (u_j -u_k)\right) (u_i - u_j) dx \\
	= &\, \sum_{i, j} \int_{\G} \left(u_i - \sum_{k=1}^N \psi_{ik} u_k \right) (u_i - u_j)\, dx - \sum_{i, j} \int_{\G} \left(u_j - \sum_{k=1}^N \psi_{jk} u_k \right) (u_i - u_j)\, dx \\
	= &\, \sum_{i, j} \int_{\G} \left(u_i -  \widetilde{u}_i \right) (u_i - u_j)\, dx - \sum_{i, j} \int_{\G} \left(u_j -  \widetilde{u}_j \right) (u_i - u_j)\, dx \\
	= &\, \sum_{i, j} \int_{\G} \left(u_i -  u_j \right) (u_i - u_j)\, dx - \sum_{i, j} \int_{\G} \left(\widetilde{u_i} -  \widetilde{u}_j \right) (u_i - u_j)\, dx \\
	= &\, \sum_{i=1}^N \sum_{j=1}^N \int_{\G} (u_i - u_j)^2\, dx - \sum_{i=1}^N \sum_{j=1}^N  \int_{\G} \left(\widetilde{u_i} -  \widetilde{u}_j \right) (u_i - u_j)\, dx \\
	= &\, \sum_{i=1}^N \sum_{j=1}^N \int_{\G} (u_i - u_j)^2\, dx, \quad \text{for} \;\; t > 0,
	\end{split}
\end{equation}
where we denote by  $\widetilde{u}_i, 1 \leq i \leq N$, the distributed all $u_k$-components of the solutions $g_k(t, g_k^0)$ on the $i$-th neuron's coupling decomposition of the boundary $\G = \bigcup \G_{ik}$, i.e.
$$
	 \widetilde{u}_i = \sum_{k=1}^N \psi_{ik} \, u_k = u_1\mid_{\G_{i1}} + \cdots + u_N \mid_{\G_{iN}}, 
$$
and we have
$$
	\sum_{i=1}^N \sum_{j=1}^N  \int_{\G} \left(\widetilde{u_i} -  \widetilde{u}_j \right) (u_i - u_j)\, dx = \sum_{i=1}^N \left( \sum_{j < i} + \sum_{j > i} \right) \int_{\G} \left(\widetilde{u_i} -  \widetilde{u}_j \right) (u_i - u_j)\, dx = 0.
$$
Note that $K_{ii} = 0$ and $K_{ij} = K_{ji}$. Substitute \eqref{EK} into \eqref{Synq} and then sum up these differential inequalities for all $1 \leq i < j \leq N$. Then we have

\beq  \bl{SQ}
	\begin{split}
	& \frac{d}{dt} \sum_{1 \leq i < j \leq N} (G \|U_{ij} (t)\|^2 + \|V_{ij} (t)\|^2 + \|W_{ij} (t)\|^2) + 2Gp \sum_{1 \leq i < j \leq N} \int_\G (u_i - u_j)^2\, dx \\
	+ &\, \sum_{1 \leq i < j \leq N} \left(2 \eta_1 dG \|U_{ij} (t)\|^2 + \|V_{ij} (t)\|^2 + r \|W_{ij} (t)\|^2\right) \leq 2N(N-1)GR\,|\gw |,  \quad  t > T_B.
	\end{split}
\eeq

Under the threshold condition \eqref{SC} of this theorem, the stimulation signal strength of this boundary coupled neural network satisfies the threshold crossing inequality: for any initial state $(g_1^0, \cdots , g_N^0) \in \mathbb{H}$, there is a finite time $\tau = \tau (g_1^0, \cdots , g_N^0) > 0$  such that 
	\beq \bl{pR}
	2Gp\, \sum_{1 \leq i < j \leq N} \int_{\G_{ij}} U_{ij}^2(t, x)\, dx > 2GR\,|\gw |,  \quad \text{for} \;\; t > \tau (g_1^0, \cdots, g_N^0).
	\eeq
It follows from \eqref{SQ} and \eqref{pR} that
\beq \bl{QQ}
	\|U_{ij} (t)\|^2 + \|V_{ij} (t)\|^2 + \|W_{ij} (t)\|^2 \leq \|g_i (t, g_i^0)\|^2 + \|g_j (t, g_j^0)\|^2 < 2Q, \quad \text{for} \;\; t > \tau,
\eeq
and
\beq \bl{Swq}
	\begin{split}
	& \frac{d}{dt} \sum_{1 \leq i < j \leq N} \left(G\|U_{ij} (t)\|^2 + \|V_{ij} (t)\|^2 + \|W_{ij} (t)\|^2\right)  \\
	+ \min &\, \{2\eta_1 d, 1, r\} \sum_{1 \leq i < j \leq N} \left(G\|U_{ij} (t)\|^2 + \|V_{ij} (t)\|^2 + \|W_{ij} (t)\|^2\right) < 0, \quad t > \tau,
	\end{split}
\eeq
where $\tau = \tau (g_1^0, \cdots, g_N^0)$ is shown in \eqref{pR}. Apply the Gronwall inequality to \eqref{Swq} combined with \eqref{QQ} to obtain
\beq \bl{FR}
	\begin{split}
		&\sum_{1 \leq i < j \leq N} \min \{G, 1\} \|g_i (t, g_i^0) - g_j (t, g_j^0)\|^2_H \\
		= &\, \sum_{1 \leq i < j \leq N} \min \{G, 1\} \left( \|U_{ij} (t)\|^2 + \|V_{ij} (t)\|^2 + \|W_{ij} (t)\|^2\right)   \\
		\leq &\, \sum_{1 \leq i < j \leq N} \left(G \|U_{ij} (t)\|^2 + \|V_{ij} (t)\|^2 + \|W_{ij} (t)\|^2\right)   \\
		\leq &\, e^{- \mu (t - \tau)}  \sum_{1 \leq i < j \leq N} \left(G \|U_{ij} (\tau)\|^2 + \|V_{ij} (\tau)\|^2 + \|W_{ij} (\tau)\|^2\right) \\
		\leq &\, 2e^{- \mu (t - \tau)} \max \{G, 1\} \,Q \to 0, \quad \text{as} \;\; t \to \infty,
	\end{split}
\eeq
where $\mu = \min \{2\eta_1 d, 1, r\}$ is the uniform exponential convergence rate. It shows that
\beq \bl{gij}
	deg_s (\text{H})= \sum_{i = 1}^N \sum_{j=1}^N \,\sup_{g_i^0, g_j^0 \in H} \left\{ \limsup_{t \to \infty} \|g_i (t, g_i^0) - g_j (t, g_j^0)\|_H \right\} = 0.
\eeq
According to Definitoon \ref{DaD}, this boundary coupled Hindmarsh-Rose neural network generated by \eqref{pb} is asymptotically synchronized in the space $H = L^2 (\gw, \mR^3)$ at a uniform exponential rate. The proof is completed.
\end{proof}

The main theorem in this paper provides a sufficient condition for realization of the asymptotic synchronization of this boundary coupled complex neural network. The biological interpretation of the threshold condition for synchronization is that the product of the \emph{boundary coupling strength} represented by the coupling coefficient $p$ and the \emph{dynamic stimulation signals} represented by $\liminf_{t \to \infty}\, \sum_{i, j} \int_{\G_{ij}} U_{ij}^2(t, x)\, dx$ for all the coupled neurons in the network exceeds the threshold constant $R|\Omega|$ explicitly expressed by the biological and mathematical parameters. 

As a corollary of Theorem \ref{ThM}, the same proof of \eqref{Keq} through \eqref{FR} shows that the complex neural networks presented can also be partly synchronized if the threshold condition \eqref{SC} is satisfied only for a subset of the neurons in the network.

\bibliographystyle{amsplain}

\begin{thebibliography}{99}
		
	\bibitem{AA} 
	B. Ambrosio and M. Aziz-Alaoui, \emph{Synchronization and control of a network of coupled reaction-diffusion systems of generalized FitzHugh-Nagumo type}, ESAIM: Proceedings, \textbf{39} (2013), 15-24.
		
	\bibitem{BF}
	J. Baladron, D. Fasoli, O. Faugeras and J. Touboul, \emph{Mean-field description and propagation of chaos in networks of Hodgkin-Huxley and FitzHugh-Nagumo neurons}, J. Mathematical Neuroscience, \textbf{2:10} (2012), doi 10.1186/2190-8567-2-10.
	
	\bibitem{AC} 
	A. Cattani, \emph{FitzHugh-Nagumo equations with with generalized diffusive coupling}, Mathematical Biosciences and Engineering, \textbf{11} (2014), 203-215.
	
	\bibitem{CV}
	V. V. Chepyzhov and M. I. Vishik, {\em Attractors for Equations of Mathematical Physics},  AMS Colloquium Publications, Vol. \textbf{49}, AMS, Providence, RI, 2002.
		
	\bibitem{DJ}
	M. Dhamala, V.K. Jirsa and M. Ding, {\em Transitions to synchrony in coupled bursting neurons}, Physical Review Letters, \textbf{92} (2004), 028101.
	
	\bibitem{Dick} 
	S.M. Dickson, \emph{Stochastic neural network dynamics: synchronization and control}, Dissertation, Loughborough University, UK, 2014.
	
	\bibitem{DH} 
	K. Ding and Q-L. Han, \emph{Synchronization of two coupled Hindmarsh-Rose neurons}, Kybernetika, \textbf{51} (2015), 784-799.
	
	\bibitem{ET}
	G.B. Ementrout and D.H. Terman, {\em Mathematical Foundations of Neurosciences}, Springer, 2010.
	
	\bibitem{FH}
	R. FitzHugh, {\em Impulses and physiological states in theoretical models of nerve membrane}, Biophysical Journal, \textbf{1} (1961), 445--466.
	
	\bibitem{HBB}
	C. Hammond, H. Bergman and P. Brown, \emph{Pathological Synchronization in Parkinson's Disease: Networks, Models and Treatments}, Trends in Neurosciences, \textbf{30} (2007), 357--364.
	
	\bibitem{Hetal} 
	I.T. Hettiarachchi et al., \emph{Chaotic synchronization of time-delay coupled Hindmarsh-Rose neurons via nonlinear control},  Nonlinear Dynamics, \textbf{86} (2016), 1249-1262. 
	
	\bibitem{HR}
	J.L. Hindmarsh and R.M. Rose, {\em A model of neuronal bursting using three coupled first-order differential equations}, Proceedings of the Royal Society London, Ser. B: Biological Sciences,  \textbf{221} (1984), 87--102.
	
	\bibitem{HH}
	A. Hodgkin and A. Huxley, {\em A quantitative description of membrane current and its application to conduction and excitation in nerve}, J. Physiology, Ser. B,  \textbf{117} (1952), 500--544.
	
	\bibitem{IJ} 
	M.M. Ibrahim and I.H. Jung, \emph{Complex synchronization of a ring-structured network of FitzHugh-Nagumo neurons with single and dual state gap junctions under ionic gates and external electrical disturbance}, IEEE Access, doi 10/1109/ACCESS.2019.2913872. 
	
	\bibitem{IG}
	G. Innocenti and R. Genesio, {\em On the dynamics of chaotic spiking-bursting transition in the Hindmarsh-Rose neuron}, Chaos, \textbf{19} (2009), 023124.
	
	\bibitem{JC}
	P. Jiruska, M. Curtis, J. Jefferys, C. Schevon, S. Schiff, K. Schindler, \emph{Synchronization and Desynchronization in Epilepsy: Controversies and Hypotheses}, Journal of Physiology, \textbf{591} (2012), 787--797.
	
	\bibitem{L} 
	K. Lehnertz et al., \emph{Synchronization phenomena in human epileptic brain networks}, Journal of Neuroscience Methods, \textbf{183} (2009), 42-48.
	
	\bibitem{NM}
	J. Nagumo, S. Arimoto and S. Yoshizawa, \emph{An active pulse transmission line simulating nerve axon}, Proc. IRE, \textbf{50} (1962), 2061-2070.
	
	\bibitem{Phan} 
	C. Phan, {\em Random attractor for stochastic Hindmarsh-Rose equations with multiplicative noise}, Discrete and Continuous Dynamical Systems, Series B, \textbf{22} (2020), doi 10.3934/dcdsb.2020060.

	\bibitem{PY}
	C. Phan and Y. You, {\em Exponential attractors for Hindmarsh-Rose equations in neurodynamics}, arXiv: 1908.05661, J. Applied Analysis and Computation, to appear, 2020.
	
	\bibitem{PYY}
	C. Phan and Y. You, {\em Random attractor for stochastic Hindmarsh-Rose equations with additive noise}, Journal of Dynamics and Differential Equations, doi.org/10.1007/s10884-019-09816-4, 2019.
	
	\bibitem{CY}
	C. Phan and Y. You, \emph{A new model of coupled Hindmarsh-Rose neurons}, J. Nonlinear Modeling and Analysis, \textbf{2} (2020), 79-94.
	
	\bibitem{PyY}
	C. Phan and Y. You, \emph{Synchronization of boundary coupled Hindmarsh-Rose neuron network}, Nonlinear Analysis: Real World Applications, https://doi.org/10.1016/j.nonrwa.2020.103139, 2020.
	
	\bibitem{QT} 
	C. Quininao and J.D. Touboul, \emph{Clamping and synchronization in the strongly coupled FitzHugh-Nagumo model}, arXiv:1804.06758v3, 2018.
		
	\bibitem{SY}
	G. R. Sell and Y. You, {\em Dynamics of Evolutionary Equations}, Applied Mathematical Sciences, Volume \textbf{143}, Springer, New York, 2002.
	
	\bibitem{SG} 
	H. Serrano-Guerrero et al., \emph{Chaotic synchronization in star coupled networks of three-dimensional cellular neural networks and its applications in communications}, International J. Nonlinear Science and Numerical Simulation, \textbf{11} (2010), 571-580.
	
	\bibitem{SK}
	D. Somers and N. Kopell, \emph{Rapid synchronization through fast threshold modulation}, Biological Cybernetics, \textbf{68} (1993), 393--407.
	
	\bibitem{S}
	C. Stover, \emph{Poincar\'{e} Inequality}, http://mathworld.wolframe.com/Poincareinequality.html.
	
	\bibitem{SPH}
	J. Su, H. Perez-Gonzalez and M. He, {\em Regular bursting emerging from coupled chaotic neurons}, Discrete and Continuous Dynamical Systems, Supplement 2007, 946--955.
	
	\bibitem{Tm}
	R. Temam, {\em  Infinite Dimensional Dynamical Systems in Mechanics anf Physics}, 2nd edition, Springer, New York, (2013).
	
	\bibitem{TBSS} 
	I. Timodeev, M. Bazhenov, J. Seigneur, and T. Sejnowski, \emph{Neuronal synchronization and thalamocortical rhythms in sleep, wake and epilepsy}, in Jasper's Basic Mechanisms of the Epilepsies, eidt. JL Noebels at al., 2012.
	
	\bibitem{WS}
	Z.L. Wang and X.R. Shi, {\em Chaotic bursting lag synchronization of Hindmarsh-Rose system via a single controller}, Applied Mathematics and Computation, \textbf{215} (2009), 1091--1097.
	
	\bibitem{WZD} 
	J. Wang, T. Zhang and B. Deng, \emph{Synchronization of FitzHugh-Nagumo neurons in external electrical stimulation via nonlinear control}, Chaos, Solitons and Fractals, \textbf{31} (2007), 30-38.
	
	\bibitem{WLZ}
	D.Q. Wei, X.S. Luo and Y.L. Zou, \emph{Firing activity of complex space-clamped FitzHugh-Nagumo neural networks}, European Physical Journal B, \textbf{63} (2008), 279-282.
	
	\bibitem{YCY} 
	X. Yang, J. Cao and Z. Yang, \emph{Synchronization of coupled reaction-diffusion neural networks with time-varying delays via pinning-impulsive controllers}, SIAM J. Control and Optimization, \textbf{51} (2013), 3486-3510.
	
	\bibitem{Yong} 
	Z. Yong et al., \emph{The synchronization of FitzHugh-Nagumo neuron network coupled by gap junction}, Chinese Physics B, \textbf{17} (2008), 2297-2303.

\end{thebibliography}

\end{document}